\newtheorem{theorem}{Theorem} 
\newtheorem{lemma}[theorem]{Lemma} 
\newtheorem{corollary}[theorem]{Corollary} 
\newtheorem{proposition}[theorem]{Proposition} 
\theoremstyle{definition} 
\newtheorem{remark}[theorem]{Remark} 
\newtheorem{definition}[theorem]{Definition}
\DeclarePairedDelimiter{\parens}{(}{)}
\edef\Erdos{Erd\H{o}s}
\def\Pr{\mathop{{}\rm\bf Pr}\nolimits}
\def\Ev{\mathop{{}\rm\bf E}\nolimits}
\begin{document}

\title{On rainbow-free colourings of uniform hypergraphs\thanks{Stanislav
\v{Z}ivn\'y was supported by a Royal Society University Research Fellowship.
This project has received funding from the European Research Council (ERC) under
the European Union's Horizon 2020 research and innovation programme (grant
agreement No 714532). The paper reflects only the authors' views and not the
views of the ERC or the European Commission. The European Union is not liable
for any use that may be made of the information contained therein.}}

\author{
Ragnar Groot Koerkamp\footnote{This work was done while the first author was at the University
of Oxford.}\\
\texttt{ragnar.grootkoerkamp@gmail.com}
\and
Stanislav \v{Z}ivn\'{y}\\
University of Oxford, UK\\
\texttt{standa.zivny@cs.ox.ac.uk}
}

\date{\today}
\maketitle

\begin{abstract}

  We study rainbow-free colourings of $k$-uniform hypergraphs; that is,
  colourings that use $k$ colours but with the property that no hyperedge
  attains all colours. We show that $p^*=(k-1)(\ln n)/n$ is the threshold
  function for the existence of a rainbow-free colouring in a random $k$-uniform
  hypergraph. 

\end{abstract}

\section{Introduction}
\label{sec:intro}

A $k$-uniform hypergraph $H$ consists of a set of vertices $V(H)$ and a
collection $E(H)$ of $k$-element subsets of $V(H)$, called hyperedges. 
For a $k$-uniform hypergraph $H$, a map $c: V(H) \to [k]$ is called a
\emph{$k$-colouring} of $H$, where $[k]:=\{1,\ldots,k\}$. The colouring $c$ is called \emph{rainbow-free} if for every hyperedge
$e=(v_1,\ldots,v_k)\in E(H)$ we have $c(e)=\{c(v_1),\ldots,c(v_k)\}\neq [k]$ and
for every $i\in [k]$ there is $v\in V(H)$ with $c(v)=i$.

The \emph{$k$-rainbow-free} problem is to determine whether a given $k$-uniform
hypergraph is rainbow-free colourable with $k$ colours.\footnote{The
\emph{$k$-rainbow-free} problem is called \emph{$k$-no-rainbow-colouring}
in~\cite{survey}. For $k=2$, a graph is rainbow-free 2-colourable if and only if
it is disconnected (cf. Remark~\ref{rem:smallk}).}

\paragraph{Contributions}

We initiate the study of $k$-rainbow-free colourings on random hypergraphs. We
consider a natural generalisation of \Erdos-R\'enyi random graphs to random
($k$-uniform) hypergraphs: each possible hyperedge is present with a fixed
probability, independently of the other hyperedges. 
In Section~\ref{sec:threshold}, we find a threshold function for the event that
a random hypergragraph of the first kind is rainbow-free colourable
(Theorem~\ref{thm:threshold}). The proof uses a second moment argument for the
lowerbound and a first moment argument with an analysis of possible types of
rainbow-free colourings for the upperbound. 

\paragraph{Related work}

The $k$-rainbow-free problem is a special case of colouring mixed hypergraphs,
introduced by Voloshin~\cite{mixed_hypergraph_first} and further extended by
Kr\'al', Kratochv\'il, Proskurowski, and Voss~\cite{mixed_hypertrees}. A mixed
hypergraph is a triple $(V,C,D)$ where $V$ is the vertex set and $C$ and $D$ are
collections of subsets of $V$. A colouring of the vertices of a mixed hypergraph
$(V,C,D)$ is called proper if each hyperedge in $C$ contains two vertices of the
same colour and each hyperedge in $D$ contains two vertices of different
colours. The \emph{strict $k$-colouring} problem is to determine whether a given
mixed hypergraph is properly colourable with exactly $k$ colours. The strict
$k$-colouring problem restricted to $k$-uniform mixed hypergraphs with
$D=\emptyset$, so-called co-hypergraphs, is precisely the $k$-rainbow-free
problem. The strict $k$-colouring of co-hypergraphs was later identified, under
the name of \emph{$k$-no-rainbow-colouring}, in the survey by Bodirsky, K\'ara,
and Martin~\cite{survey} as an interesting case of unknown complexity of
\emph{surjective} constraint satisfaction problems on a three-element domain. 

Constraint satisfaction problems (CSPs) are generalisations of graph
homomorphisms~\cite{Hell:graphs}. A graph homomorphism from $G$ to $H$ is a map
from the vertex set of $G$ to the vertex set of $H$ that preserves all edges
(but not necessarily non-edges). For a fixed graph $H$, the
\emph{$H$-coulouring} problems is to determine whether a given graph $G$ admits
a homomorphism to $H$. For instance, taking $H=K_3$ to be the complete graph on
$3$ vertices, $H$-colouring is the well known $3$-colouring problem. 
Hell and Ne\v{s}et\v{r}il established that, unless $H$
contains a loop or is a bipartite graph, the $H$-colouring problem is
NP-complete~\cite{complexity_H_colouring}. 

In an influential paper, Feder and Vardi conjectured that a similar dichotomy
holds for every \emph{digrapgh} $H$, or equivalently, for every finite
relational structure (such as hypergraphs)~\cite{feder_vardi_conjecture}. This
conjecture, known as the CSP dichotomy conjecture, was confirmed by two
independent papers by Bulatov~\cite{Bulatov17:focs} and Zhuk~\cite{Zhuk17:focs},
respectively. While the recent progress on the CSP dichotomy conjecture (and
various CSP variants) relied heavily on the so-called algebraic
approach~\cite{Bulatov05:classifying}, this method does not seem direclty
amenable to \emph{surjective} CSPs, in which we require the homomorphism be
surjective. A dichotomy theorem is known to hold for surjective CSPs on
two-element domains by the work of Creignou and H\'ebrard~\cite{Creignou97}. The
$k$-rainbow-free problem is equivalent to a surjective CSP on a $k$-element domain $[k]$
with a single $k$-ary relation $[k]^k - \{(x_1,\ldots,x_k): x_1,\ldots,x_k
\mbox{ distinct}\}$. Very recently, Zhuk has announced NP-hardness of the
$k$-rainbow-free problem for $k\geq 3$~\cite{Zhuk20:arxiv}.

\section{Preliminaries}
\label{sec:prelim}

If $k$ is clear from the context, we will call a $k$-colouring simply a
colouring. For a colouring $c$ of a $k$-uniform hypergraph, we denote the colour
classes by $C_i:=c^{-1}(i)$, $i\in[k]$.

We state now same basic properties of rainbow-free colourings. 

\begin{definition}\label{def:induced}
  Given a $k$-uniform hypergraph $H$ and a subset of vertices $S\subseteq V(H)$,
  we define the \emph{induced subhypergraph} $H_S$ as the $(k-1)$-uniform
  hypergraph with vertices $V(H_S):=V(H)\setminus S$ and hyperedges $E(H_S) :=
  \{e\cap (V(H)\setminus S) \mid e\in E(H)\text{ and } |e\cap S|=1\}$.

	For up to $k$ disjoint sets $S_1,\dots S_\ell\subseteq V(H)$ we write
    $H_{S_1,\dots,S_\ell}:=((H_{S_1})\dots)_{S_\ell}$ for the repeated induced subhypergraph.
\end{definition}

For $k=3$ in Definition~\ref{def:induced}, $H_S$ will be a graph.
Furthermore, note that the order of the subscripts in the definition of the
repeated induced subhypergraph does not matter.

This notion of induced subhypergraphs is useful because of the following proposition.

\begin{proposition}\label{prop:reduction}
	Let $k\geq 2$ be an integer.
  A $k$-uniform hypergraph $H$ is rainbow-free $k$-colourable if and only if there exists a non-empty subset of vertices
	$\emptyset\neq S\subsetneq V(H)$ such that the $(k-1)$-uniform hypergraph $H_S$
  is rainbow-free $(k-1)$-colourable.
	In particular, this implies the existence of a colouring $c$ of $H$ with $C_k = S$.
\end{proposition}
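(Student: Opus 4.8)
The plan is to establish both implications by an explicit translation between a rainbow-free $k$-colouring of $H$ and a rainbow-free $(k-1)$-colouring of a suitable induced subhypergraph $H_S$, with the only real content being the bookkeeping that both colourings remain surjective onto their colour sets.

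For the ``only if'' direction, I would start from a rainbow-free $k$-colouring $c\colon V(H)\to[k]$ and set $S:=C_k=c^{-1}(k)$. Since $c$ is surjective, every colour class is non-empty, so $S\neq\emptyset$; and because $C_1$ is non-empty and disjoint from $S$, we get $S\subsetneq V(H)$. Next I would verify that the restriction $c'$ of $c$ to $V(H)\setminus S$ is a rainbow-free $(k-1)$-colouring of $H_S$. It takes values in $[k-1]$ and is onto since the non-empty classes $C_1,\dots,C_{k-1}$ all avoid $S$. For rainbow-freeness, take $f\in E(H_S)$ and write $f=e\cap(V(H)\setminus S)$ with $e\in E(H)$, $|e\cap S|=1$; then $e$ is $f$ together with a single vertex of colour $k$, so if $c'(f)=[k-1]$ we would get $c(e)=[k]$, contradicting that $c$ is rainbow-free.

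For the ``if'' direction, suppose $\emptyset\neq S\subsetneq V(H)$ and $c'\colon V(H)\setminus S\to[k-1]$ is a rainbow-free $(k-1)$-colouring of $H_S$. Define $c\colon V(H)\to[k]$ by $c|_{V(H)\setminus S}=c'$ and $c\equiv k$ on $S$. Surjectivity onto $[k]$ is clear: colours $1,\dots,k-1$ are already hit by $c'$ and colour $k$ is hit because $S\neq\emptyset$. For rainbow-freeness, I would split on $m:=|e\cap S|$ for $e\in E(H)$: if $m=0$ then $c(e)\subseteq[k-1]$; if $m\ge 2$ then colour $k$ repeats and $|c(e)|\le k-1$; and if $m=1$ then $f:=e\cap(V(H)\setminus S)\in E(H_S)$, so some $j\in[k-1]$ is absent from $c'(f)$ and hence from $c(e)=c'(f)\cup\{k\}$. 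In every case $c(e)\ne[k]$. Finally, the ``in particular'' clause is immediate from this construction, in which $C_k=S$ by definition (and in the first direction $S$ was chosen to be $C_k$).

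I do not expect a genuine obstacle: the argument is a finite case analysis. The two points that need attention are making sure the produced colourings are \emph{surjective} — this is precisely where the hypotheses that $S$ is non-empty and a proper subset are used — and organising the case split on $|e\cap S|$ so that only the case $|e\cap S|=1$ interacts with $H_S$. One should also keep in mind the degenerate reading for $k=2$, where a rainbow-free $1$-colouring of the $1$-uniform hypergraph $H_S$ simply means $E(H_S)=\emptyset$; this is consistent with the fact (noted earlier) that a graph is rainbow-free $2$-colourable iff it is disconnected.
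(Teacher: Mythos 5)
Your proposal is correct and follows essentially the same route as the paper's proof: taking $S=C_k$ and restricting $c$ for the forward direction, and extending $c'$ by colour $k$ on $S$ with the case split on $|e\cap S|\in\{0,1,\ge 2\}$ for the converse. The extra attention you give to surjectivity and to the $k=2$ degenerate case is consistent with (and slightly more explicit than) what the paper does.
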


\begin{proof}
	First suppose that $H$ has a rainbow-free $k$-colouring $c$.
	Let $S$ be $C_k\neq \emptyset$ and consider $H_S$.
	Write $c'$ for the colouring $c$ restricted to $V(H_S) = V(H) \setminus C_k$.
  We will show that $c'$ is indeed a rainbow-free $(k-1)$-colouring of $H_S$.
	First note that $C'_1\cup \dots\cup C'_{k-1} = V(H_S)$,
	and hence every vertex of $H_S$ has a well-defined colour in $[k-1]$.
	Now consider a hyperedge $e'\in E(H_S)$.
	By definition we have $e' = V(H_S) \cap e$ for some $e\in E(H)$.
	We will use a proof by contradiction to show that $c'(e') \neq [k-1]$,
	so assume that $c'(e') = [k-1]$. This implies $[k-1]=c(e') \subseteq c(e)$.
	Furthermore we know that $e\cap S= e\cap C_k\neq \emptyset$, and hence $k\in c(e)$.
	This implies that $c(e) = [k-1]\cup \{k\} = [k]$, which is a contradiction.
	We conclude that $c'(e) \neq [k-1]$ and hence $C'$ is a rainbow-free colouring of $H_S$, as required.

	For the other direction assume that $\emptyset \neq S\subsetneq V(H)$ is such that $H_S$
  has a rainbow-free $(k-1)$-colouring $c'$.
	Now extend $c'$ to a $k$-colouring $c$ of $H$ by setting $c(v) = k$ for all $v\in S$.
	Thus, we have $C_k = S$.
	Let $e$ be a hyperedge in $H$.
	We wish to show that $c(e) \neq [k]$, so that $c$ is a rainbow-free colouring indeed.
	If $|e\cap S|=0$ we have $k\notin c(S)$.
	In the $|e\cap S|=1$ case we have $e' := e\cap (V(H)\setminus S) \in E(H_S)$.
  Since $c'$ is a rainbow-free $(k-1)$-colouring of $H_S$ we know that $c(e') = c'(e') \neq [k-1]$.
	Adding in the one vertex $v$ of $e$ that is in $S=C_k$,
	we get $c(e) = c(e' \cup \{v\}) \neq [k-1]\cup\{k\} = [k]$, as required.
	If $|e\cap S|\geq 2$ there are at most $k-2$ vertices that have a colour in $[k-1]$.
	Since $k-2 < |[k-1]|$ we know that $c(e')$ can not attain all colours in $[k-1]$.
	Hence, this case also implies $c(e)\neq [k]$.
\end{proof}

By induction, it follows that we can apply multiple steps of
Proposition~\ref{prop:reduction} at once.

\begin{corollary}\label{cor:reduction2}
	Let $2\leq \ell<k$ be integers.
	A $k$-uniform hypergraph $H$ is rainbow-free $k$-colourable if and only if
	there exist disjoint non-empty subsets $S_1, \dots, S_\ell$ of $V(H)$ such
	that $H_{S_1,\dots, S_\ell}$ is rainbow-free $(k-\ell)$-colourable.
\end{corollary}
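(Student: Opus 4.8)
The plan is to induct on $\ell$, using Proposition~\ref{prop:reduction} as the single-step engine together with the identity $H_{S_1,\dots,S_\ell}=(H_{S_1})_{S_2,\dots,S_\ell}$ from Definition~\ref{def:induced}: peeling off one set $S_1$ drops the uniformity and the number of colours each by one. It is convenient to treat the base case as $\ell=1$, which is exactly Proposition~\ref{prop:reduction} (whose hypothesis $k\ge 2$ is just $k>\ell$ in this case); the corollary then follows for all $2\le\ell<k$.

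For the inductive step, suppose the statement, read with $\ell$ allowed to equal $1$, holds for some $\ell\ge 1$, and consider $\ell+1$ with $\ell+1<k$. For the forward direction, if $H$ is rainbow-free $k$-colourable then Proposition~\ref{prop:reduction} (applicable since $k>\ell+1\ge 2$) yields $\emptyset\neq S_1\subsetneq V(H)$ with $H_{S_1}$ rainbow-free $(k-1)$-colourable. Since $H_{S_1}$ is $(k-1)$-uniform and $\ell<k-1$, the induction hypothesis applied to $H_{S_1}$ produces disjoint non-empty $S_2,\dots,S_{\ell+1}\subseteq V(H_{S_1})=V(H)\setminus S_1$ with $(H_{S_1})_{S_2,\dots,S_{\ell+1}}=H_{S_1,\dots,S_{\ell+1}}$ rainbow-free $((k-1)-\ell)=(k-(\ell+1))$-colourable. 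As each of $S_2,\dots,S_{\ell+1}$ avoids $S_1$, the sets $S_1,\dots,S_{\ell+1}$ are pairwise disjoint and non-empty, as required.

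For the converse, given disjoint non-empty $S_1,\dots,S_{\ell+1}$ with $H_{S_1,\dots,S_{\ell+1}}$ rainbow-free $(k-(\ell+1))$-colourable, note that $S_2,\dots,S_{\ell+1}$ are disjoint non-empty subsets of $V(H)\setminus S_1=V(H_{S_1})$ and that $(H_{S_1})_{S_2,\dots,S_{\ell+1}}=H_{S_1,\dots,S_{\ell+1}}$ is rainbow-free $((k-1)-\ell)$-colourable, so the induction hypothesis applied to the $(k-1)$-uniform hypergraph $H_{S_1}$ gives that $H_{S_1}$ is rainbow-free $(k-1)$-colourable. Since $S_2\neq\emptyset$ is disjoint from $S_1$ we have $\emptyset\neq S_1\subsetneq V(H)$, and Proposition~\ref{prop:reduction} applied with $S=S_1$ then shows that $H$ is rainbow-free $k$-colourable.

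There is no substantial obstacle here beyond bookkeeping, and that is the part to watch: one must check that the uniformity parameter stays at least $2$ at every invocation of Proposition~\ref{prop:reduction} (guaranteed by $\ell+1<k$), that every set extracted at a given step lies in the vertex set of the current induced hypergraph and is hence automatically disjoint from all previously extracted sets, and that whenever a set serves as the argument $S$ of Proposition~\ref{prop:reduction} it is a proper subset of the ambient vertex set (which holds because there is always a further non-empty set disjoint from it, so it cannot be everything). This is precisely the ``apply multiple steps at once'' remark preceding the statement.
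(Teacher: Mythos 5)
Your proof is correct and takes exactly the route the paper intends: the paper offers no explicit argument for this corollary beyond the remark that it ``follows by induction'' from Proposition~\ref{prop:reduction}, and your induction on $\ell$ (with $\ell=1$ as the base case being the proposition itself) is precisely that argument spelled out. The bookkeeping points you flag --- disjointness coming for free because each new set lives in $V(H)\setminus(S_1\cup\dots)$, properness of $S_1$ from the existence of a further non-empty disjoint set, and the uniformity never dropping below $2$ at an invocation of the proposition --- are all handled correctly.
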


\begin{remark}\label{rem:smallk}
We remark that Proposition~\ref{prop:reduction} also applies to the corner case of $k=2$.
In particular, a graph $H$ is rainbow-free $2$-colourable if and only if there is a subset $S\subsetneq V(H)$ with no outgoing edges; in other words, $H$ is disconnected.
\end{remark}

If all possible rainbow-free hyperedges are
given, not only do we know that the rainbow-free colouring is unique, but we can
also easily find it.

\begin{proposition}
	\label{prop:all_edges}
  Suppose that $H$ is a $k$-uniform hypergraph with a surjective colouring
  $c:V(H)\to[k]$.
	Furthermore assume that $E=\{e\in V^{(k)}\mid c(e)\neq [k]\}$ consists of
  all rainbow-free hyperedges.
	Write $\overline E:=V^{(k)} \setminus E$ for the set of rainbow hyperedges with $c(e)=[k]$.
	The colour classes of $c$ are determined by
    $C_{c(v)}:=\{v\} \cup \{u \in V \mid\forall e\in \overline E: \{u,v\}\nsubseteq e\}$.
\end{proposition}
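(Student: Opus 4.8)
The plan is to show that the stated formula for $C_{c(v)}$ is correct by proving two inclusions: that the right-hand side is contained in the colour class of $v$, and conversely that the colour class of $v$ is contained in the right-hand side. The key observation driving both directions is a characterisation of when two vertices $u,v$ lie in a common rainbow hyperedge: since $E$ contains \emph{all} rainbow-free hyperedges, a $k$-set $e$ is rainbow (i.e.\ $e\in\overline E$) precisely when $c(e)=[k]$, equivalently when $e$ contains exactly one vertex of each colour class. Thus two vertices $u,v$ with $c(u)\neq c(v)$ are contained in some common rainbow hyperedge $e$ as long as one can complete $\{u,v\}$ to a full rainbow set, which is possible exactly when every colour class is non-empty; since $c$ is surjective, this always works. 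Conversely, if $c(u)=c(v)$ with $u\neq v$, then no rainbow hyperedge can contain both $u$ and $v$, because a rainbow hyperedge has at most one vertex of each colour.

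First I would record this dichotomy formally: for distinct $u,v\in V$, there exists $e\in\overline E$ with $\{u,v\}\subseteq e$ if and only if $c(u)\neq c(v)$. The ``only if'' direction is immediate from the definition of a rainbow hyperedge. For the ``if'' direction, assuming $c(u)\neq c(v)$, I would pick one vertex from each colour class other than $c(u)$ and $c(v)$ — possible by surjectivity of $c$ — and together with $u$ and $v$ this yields a $k$-set attaining all $k$ colours, hence an element of $\overline E$ containing $\{u,v\}$. Note this uses $k\geq 2$ implicitly and the fact that $V$ is large enough to host such a set, which is guaranteed since the colour classes are non-empty and we only need one vertex from each.

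With that lemma in hand, the two inclusions follow quickly. If $u$ is in the right-hand side and $u\neq v$, then no $e\in\overline E$ contains $\{u,v\}$, so by the dichotomy $c(u)=c(v)$, i.e.\ $u\in C_{c(v)}$; and $v\in C_{c(v)}$ trivially. Conversely, if $u\in C_{c(v)}$ and $u\neq v$, then $c(u)=c(v)$, so again by the dichotomy no $e\in\overline E$ contains both $u$ and $v$, placing $u$ in the right-hand side; and $v$ itself is included as the explicit singleton $\{v\}$. Combining gives equality of the two sets.

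I do not expect a serious obstacle here; the only point requiring a little care is making sure the completion argument in the ``if'' direction of the dichotomy is valid, namely that the colour classes other than those of $u$ and $v$ are non-empty (which is exactly surjectivity) and that picking one representative from each gives a genuine hyperedge of size $k$. A secondary subtlety is that the proposition's formula defines $C_{c(v)}$ for each $v$, so one should observe the definition is consistent — i.e.\ if $c(v)=c(v')$ then the two formulas yield the same set — but this follows immediately from the equality $C_{c(v)}=c^{-1}(c(v))$ that we have just established. I would close by remarking that this gives an explicit polynomial-time procedure to recover $c$ (up to permutation of colours) from the hyperedge set, matching the claim preceding the proposition.
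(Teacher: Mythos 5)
Your proposal is correct and follows essentially the same route as the paper: both establish the dichotomy that distinct vertices $u,v$ lie in a common rainbow hyperedge if and only if $c(u)\neq c(v)$, using surjectivity to pick one representative of each remaining colour class for the ``if'' direction, and then read off the colour classes. Your additional remarks on well-definedness and algorithmic recovery are fine but not needed.
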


\begin{proof}
	If $\{u,v\}\subseteq e$ for some $e\in \overline E$,
	we have that $c(u)\neq c(v)$, since $e$ would be a rainbow-free
  hyperedge otherwise.

	For the other direction assume that $c(u)\neq c(v)$.
	By surjectivity of $c$, all colour classes are non-empty and hence there
  exists a vertex $x_j$ for every colour
	$j$ in $[k]-\{c(u),c(v)\}$.
	Using these vertices $x_j$ together with $u$ and $v$ yields a rainbow hyperedge,
	which is an element of $\overline E$.
	Hence there exists a rainbow hyperedge $e\in \overline E$ containing both $u$ and $v$.
  This implies that the condition from the statement of the proposition is both sufficient and necessary.
\end{proof}

\section{Random hypergraphs}
\label{sec:threshold}

The following definition of random hypergraphs is a direct generalisation of the
\Erdos-R\'enyi random graph model: every possible hyperedge is added with a
given probability.

\begin{definition}
	Let $p : \mathbb N \to [0,1]$ be a given probability function.
	A \emph{random $k$-uniform hypergraph} $H^k_{n,p}$ is a $k$-uniform hypergraph created by the following process:
	\begin{itemize}
    \item Start with a set of vertices $V(H^k_{n,p}):=V$ with $|V|=n$.
    \item For each hyperedge $e\in V^{(k)}$, add $e$ to $E(H^k_{n,p})$ with probability $p=p(n)$.
	\end{itemize}
\end{definition}

Let $A$ be a hypergraph property (in our case being rainbow-free colourable). We
write $\Pr[H^k_{n,p}\models A]$ for the probability that $H^k_{n,p}$ satisfies
$A$. A function $r(n)$ is called a \emph{threshold function} for a hypergraph
property $A$ if (i) when $p(n)\ll r(n)$, $\lim_{n\to\infty}\Pr[H^k_{n,p}\models
A]=0$, (ii) when $p(n)\gg r(n)$, $\lim_{n\to\infty}\Pr[H^k_{n,p}\models A]=1$, or
vice versa.

Our main result is the following theorem.

\begin{theorem}
	\label{thm:threshold}
	The function $p^*=(k-1)(\ln n)/n$ is a threshold function for the event that a random $k$-uniform
	hypergraph $H^k_{n,p}$ is rainbow-free colourable.
\end{theorem}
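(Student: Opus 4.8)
The plan is to prove separately that $\Pr[H^k_{n,p}\text{ is rainbow-free colourable}]\to 1$ whenever $p\le(1-\varepsilon)p^*$, and $\to 0$ whenever $p\ge(1+\varepsilon)p^*$, for an arbitrary fixed $\varepsilon\in(0,1)$. This implies the theorem, since $p\ll p^*$ forces $p\le(1-\varepsilon)p^*$ for large $n$ and $p\gg p^*$ forces $p\ge(1+\varepsilon)p^*$ for large $n$. The only combinatorial fact I need is a direct reformulation of the definition: a $k$-uniform hypergraph $H$ is rainbow-free $k$-colourable if and only if there is a surjection $c\colon V(H)\to[k]$ whose colour classes $C_1,\dots,C_k$ admit no \emph{transversal} hyperedge, i.e.\ no $e\in E(H)$ has $|e\cap C_i|=1$ for every $i\in[k]$. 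For classes of prescribed sizes $a_1,\dots,a_k$ (with $\sum_i a_i=n$, each $a_i\ge 1$), the set of potential transversal hyperedges has size exactly $\prod_{i=1}^k a_i$, and in $H^k_{n,p}$ these hyperedges are present independently, so a colouring with these class sizes is rainbow-free with probability $(1-p)^{\prod_i a_i}$.

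\emph{Lower bound, via a second moment argument.} Let $X$ count the $(k-1)$-element sets $T\subseteq V$ that are contained in no hyperedge of $H^k_{n,p}$. If $X>0$ then $H^k_{n,p}$ is rainbow-free colourable: colour the vertices of $T$ by $1,\dots,k-1$ and all remaining vertices by $k$ (this is the colouring of Corollary~\ref{cor:reduction2} with each $S_i$ a singleton; its only potential transversal hyperedges are the $k$-sets containing $T$, of which there are none). Since a fixed $T$ lies in exactly $n-k+1$ potential hyperedges, $\Ev[X]=\binom{n}{k-1}(1-p)^{n-k+1}$; using $\ln\binom{n}{k-1}=(k-1)\ln n-O_k(1)$ and $(n-k+1)\ln(1-p)=-pn+o(1)$ (as $p^2n\to 0$) gives $\ln\Ev[X]\ge\varepsilon(k-1)\ln n-O_k(1)\to\infty$ when $p\le(1-\varepsilon)p^*$. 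For the variance, the events ``$T$ lies in no hyperedge'' and ``$T'$ lies in no hyperedge'' depend on disjoint sets of hyperedges unless $T\cup T'$ is contained in a single $k$-set, which happens only for $T=T'$ or for $|T\cap T'|=k-2$ (when $T,T'$ share exactly the one hyperedge $T\cup T'$). Counting these pairs yields $\Ev[X^2]/\Ev[X]^2=1+O(1/\Ev[X])+O(p/n^{k-2})\to 1$, so $\Pr[X=0]\le\Var[X]/\Ev[X]^2\to 0$ by Chebyshev's inequality.

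\emph{Upper bound, via a first moment argument over colouring types.} Let $Y$ be the number of rainbow-free colourings of $H^k_{n,p}$; it suffices to show $\Ev[Y]\to 0$, since then $\Pr[Y\ge 1]\to 0$ by Markov's inequality. Grouping colourings by the size $a^*$ of a largest colour class and writing $m=n-a^*$ (so $k-1\le m\le n(k-1)/k$), the number of colourings with a given $m$ is at most $k\binom{n}{m}(k-1)^m$, and their colour classes satisfy $\prod_i a_i\ge(n-m)(m-k+2)$; hence
\[
  \Ev[Y]\ \le\ k\sum_{m=k-1}^{\lfloor n(k-1)/k\rfloor}\binom{n}{m}(k-1)^m(1-p)^{(n-m)(m-k+2)}.
\]
The term $m=k-1$ (the ``star'' colouring: $k-1$ singleton classes and one huge class) equals $\binom{n}{k-1}(1-p)^{n-k+1}\le n^{(k-1)-(1+\varepsilon)(k-1)+o(1)}=n^{-\varepsilon(k-1)+o(1)}\to 0$. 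For $k-1\le m\le\varepsilon' n$ with $\varepsilon'$ a sufficiently small constant, replacing $m$ by $m+1$ multiplies the count by $\Theta(n/m)$ and multiplies the probability factor by $(1-p)^{n-2m+k-3}\le n^{-(1+\varepsilon)(k-1)(1-o(1))}$, so each step multiplies the whole term by $O(n^{1-(1+\varepsilon)(k-1)+o(1)})=o(1)$ uniformly; thus these terms decay geometrically and sum to $n^{-\varepsilon(k-1)+o(1)}$. For $m>\varepsilon'n$ one has $\prod_i a_i\ge(n-m)(m-k+2)=\Omega(n^2)$, so $(1-p)^{\prod_i a_i}\le n^{-\Omega(n)}$, which overwhelms the at most $k^n=n^{o(n)}$ colourings; hence this tail is also $n^{-\Omega(n)}\to 0$. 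Summing, $\Ev[Y]\to 0$.

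The step I expect to be the main obstacle is exactly the ``analysis of possible types'' in the first moment: one must bound $(\text{number of colourings of a given type})\cdot(1-p)^{\prod_i a_i}$ uniformly over all profiles and verify that the trade-off between the number of colourings (increasing in $m$) and the number $\prod_i a_i$ of hyperedges they must avoid (also increasing in $m$) always tips towards decay, the delicate regime being intermediate $m$ where neither factor is negligible; deciding how to split the range of $m$ and which crude estimates are affordable where is where the care is needed, and the constant $k-1$ in $p^*$ is precisely what makes each $m\to m+1$ step cost a factor $o(1)$. The second moment computation for the lower bound, by contrast, is a routine generalisation of the classical isolated-vertex threshold for \Erdos--R\'enyi random graphs.
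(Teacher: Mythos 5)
Your proof is correct, and the overall strategy coincides with the paper's: a second moment argument for the ``star'' colourings (with $k-1$ singleton classes) below the threshold, and a first moment argument over colouring types above it. The lower bound is essentially the paper's Lemma~\ref{lem:small_p} in disguise: your $X$, counting $(k-1)$-sets contained in no hyperedge, is exactly the paper's count of rainbow-free colourings of type $(1,\dots,1,n-k+1)$, with the same dependency structure (only pairs with $|T\cap T'|=k-2$ interact, sharing the single hyperedge $T\cup T'$) and the same Chebyshev-type conclusion. The upper bound, however, is organised genuinely differently. The paper parametrises colourings by the full sorted size profile $(s_1,\dots,s_k)$ and splits into five cases, bounding the exponent $\Sigma - \frac{D}{n}\Pi(n-\Sigma)$ by a separate algebraic computation in each; you parametrise only by $m=n-a^*$, treat $m=k-1$ directly, dispose of the range $m>\varepsilon'n$ with the crude bound $\prod_i a_i=\Omega(n^2)$ against at most $k^n$ colourings, and control the intermediate range by a ratio test: each step $m\to m+1$ gains a factor $\Theta(n/m)$ in the count but pays $(1-p)^{n-2m+k-3}\le n^{-(1+\varepsilon)(k-1)(1-O(\varepsilon'))}$, so the terms decay geometrically from the $m=k-1$ term. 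Your version is shorter and makes the origin of the constant $k-1$ transparent (one extra non-majority vertex costs roughly a factor $n\cdot n^{-(1+\varepsilon)(k-1)}$); the paper's finer case analysis produces explicit per-type polynomial bounds that it reuses in Proposition~\ref{prop:type1} to show that above the threshold any rainbow-free colouring is, with high probability, of type $(1,\dots,1,n-k+1)$ --- a refinement your middle-range estimate could also deliver. The only loose phrase is the ``$1-o(1)$'' in your ratio bound, which is really $1-O(\varepsilon')$ for the fixed constant $\varepsilon'$; this is harmless since $\varepsilon'$ is chosen sufficiently small.
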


The two parts of the proof, one for small $p$ and one for large $p$, are covered
by the following two lemmas. The result is well known for
$k=2$~\cite[Theorem~VII.9]{Bollobas} and corresponds to disconnectedness (cf.
Remark~\ref{rem:smallk}). Hence we will assume $k\geq 3$.

\begin{lemma}
	\label{lem:small_p}
	For $k\geq 3$, the random hypergraph $H^k_{n,p}$
	is rainbow-free colourable with high probability if $p\leq D\frac{\ln n}n$ for some $D<k-1$.
\end{lemma}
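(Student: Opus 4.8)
The plan is to use the second moment method on the number of rainbow-free colourings, exploiting the reduction in Proposition~\ref{prop:reduction} to produce a convenient family of candidate colourings whose presence is easy to control. Concretely, by Proposition~\ref{prop:reduction}, $H^k_{n,p}$ is rainbow-free colourable as soon as there is a single vertex $v$ whose removal-induced $(k-1)$-uniform hypergraph $H_{\{v\}}$ is rainbow-free $(k-1)$-colourable — but an even cheaper sufficient condition is available: by Corollary~\ref{cor:reduction2} with $\ell=k-2$, it suffices to find disjoint nonempty $S_1,\dots,S_{k-2}$ such that $H_{S_1,\dots,S_{k-2}}$ is a disconnected graph, i.e. (cf.\ Remark~\ref{rem:smallk}) has a subset with no outgoing edges. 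The simplest instantiation: fix $k-2$ distinct vertices $u_1,\dots,u_{k-2}$ and one further vertex $w$, set $S_i=\{u_i\}$, and ask that $w$ be isolated in the graph $H_{\{u_1\},\dots,\{u_{k-2}\}}$; unwinding the definitions, this says precisely that no hyperedge $e$ of $H$ contains $w$ together with exactly one vertex from each of $\{u_1\},\dots,\{u_{k-2}\}$ (equivalently $e\supseteq\{w,u_1,\dots,u_{k-2}\}$). So it suffices to show that with high probability there exist $u_1,\dots,u_{k-2},w$ such that $H$ contains \emph{no} hyperedge extending $\{w,u_1,\dots,u_{k-2}\}$.

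The main step is therefore a second moment argument for the random variable $X$ counting ordered tuples $(w,u_1,\dots,u_{k-2})$ of distinct vertices such that none of the $n-k+1$ potential hyperedges through $\{w,u_1,\dots,u_{k-2}\}$ is present. First I would compute $\Ev[X]$: there are $\sim n^{k-1}$ tuples, and each survives with probability $(1-p)^{n-k+1}=\exp(-pn+o(pn))$. With $p\le D(\ln n)/n$ and $D<k-1$, this probability is at least roughly $n^{-D+o(1)}$, so $\Ev[X]\ge n^{(k-1)-D-o(1)}\to\infty$. Then I would bound $\Ev[X^2]=\sum_{\text{pairs of tuples}}\Pr[\text{both survive}]$ by splitting on the number of hyperedges that would have to be absent for \emph{both} tuples; since two tuples on disjoint "cores" forbid essentially disjoint sets of $\sim n$ hyperedges (they share at most $O(1)$ forbidden hyperedges, namely those containing both cores), the dominant contribution gives $\Ev[X^2]=(1+o(1))\Ev[X]^2$, and Chebyshev yields $\Pr[X=0]\to 0$. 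Care is needed for the lower-order terms where the two cores overlap in $j$ vertices for $1\le j\le k-1$: those contribute at most $n^{(k-1)+(k-1-j)}\cdot(1-p)^{\text{(roughly }2n-n)}$-type terms, which I expect to be $o(\Ev[X]^2)$ precisely because the exponent deficit $n$ in one copy of $(1-p)^n$ dominates the polynomial gain $n^{k-1}$ when $pn=D\ln n$ with $D<k-1$ — this is exactly where the constant $k-1$ (rather than something larger) enters.

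The expected main obstacle is the overlap bookkeeping in $\Ev[X^2]$: one must verify that for two tuples sharing $j$ core-vertices the number of hyperedges forced absent by \emph{at least one} of them is $\ge 2n - O_k(1)$ when $j<k-1$ (so that the "both survive" probability is about $(1-p)^{2n}$, not $(1-p)^{n}$), and only when the two cores coincide as sets does the count drop to $\sim n$. Handling the boundary case $j=k-1$ (cores equal as sets but $w$'s differ, or one $u_i$ differs) needs a slightly finer count of shared hyperedges but still produces a term $o(\Ev[X]^2)$ for $D<k-1$. A secondary technical point is making the asymptotics uniform: since $p$ is only assumed $\le D(\ln n)/n$ it could be much smaller, but that only \emph{increases} $\Ev[X]$ and does not hurt the variance bound, so monotonicity lets us assume $p=D(\ln n)/n$ exactly (alternatively, note rainbow-free colourability is monotone decreasing in edges, so it suffices to treat the largest allowed $p$). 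Putting these together gives $\Pr[H^k_{n,p}\text{ is rainbow-free colourable}]\ge\Pr[X>0]\to 1$.
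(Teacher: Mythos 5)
Your proposal is correct and follows essentially the same route as the paper: your variable $X$ counting $(k-1)$-tuples $\{w,u_1,\dots,u_{k-2}\}$ contained in no hyperedge is (up to the $(k-1)!$ ordering factor) exactly the paper's count of rainbow-free colourings of type $(1,\dots,1,n-k+1)$, and your expectation estimate and overlap bookkeeping for the second moment (disjoint forbidden-edge families unless the cores share $k-2$ vertices, with the shared-core diagonal contributing only $O(\Ev[X])$) match the paper's computation of $\Delta=O(n^{k-2D})=o(\Ev[X]^2)$. The only cosmetic difference is that you reach this family of colourings via Corollary~\ref{cor:reduction2} and Remark~\ref{rem:smallk} rather than directly.
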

\begin{lemma}
	\label{lem:large_p}
	If $p\geq D(\ln n)/n$ with $D>k-1$ and $k\geq 3$,
	the random hypergraph $H^k_{n,p}$ is not rainbow-free colourable with high probability.
\end{lemma}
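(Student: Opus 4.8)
The plan is to run a union bound (first moment method) over all possible rainbow-free colourings and show that, when $p \geq D(\ln n)/n$ with $D > k-1$, the expected number of rainbow-free colourings tends to $0$. A rainbow-free colouring $c$ with colour classes $C_1,\dots,C_k$ survives in $H^k_{n,p}$ precisely when \emph{no} rainbow hyperedge is present, i.e.\ every $k$-set meeting all $k$ classes is absent. If $c$ has class sizes $n_1,\dots,n_k$ (all positive, summing to $n$), the number of rainbow $k$-sets is $\prod_{i=1}^k n_i$, so $\Pr[c \text{ survives}] = (1-p)^{\prod_i n_i}$. Summing over all colourings, $\Ev[\#\text{rainbow-free}] = \sum (1-p)^{\prod_i n_i}$ where the sum is over ordered partitions of $V$ into $k$ nonempty classes; grouping by the size vector, this is $\sum_{(n_1,\dots,n_k)} \binom{n}{n_1,\dots,n_k}(1-p)^{\prod_i n_i}$. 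The key quantity governing survival is $\prod_i n_i$, which is smallest (making survival easiest) when the partition is as unbalanced as possible, i.e.\ when one class is a singleton. So the dominant contribution comes from colourings with small classes, and the natural way to organise the argument is by the size $s$ of the smallest colour class.

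The key steps, in order: (i) Parametrise by $s := \min_i n_i$, so $\prod_i n_i \geq s\cdot$(product of the others) and, more usefully, $\prod_i n_i \geq s \cdot \bigl(\frac{n-s}{k-1}\bigr)^{k-1}$ is not quite what we want — instead use that if the smallest class has size $s$ then removing it leaves $n-s$ vertices in $k-1$ classes so $\prod_{i\ge 2} n_i \geq (n-s)$ trivially, giving $\prod_i n_i \geq s(n-s)$ when $k=3$ and more generally $\prod_i n_i \ge s(n-s)$... actually the clean bound is: at least one class other than the smallest has size $\geq (n-s)/(k-1)$, so $\prod_i n_i \geq s \cdot \frac{n-s}{k-1}$ when the remaining classes each have size $\ge 1$; but we can do better — with $k-1 \geq 2$ classes among $n-s$ vertices, $\prod_{i\geq 2} n_i \geq n-s-(k-2) \cdot 1 \cdot \dots$, hmm. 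Let me instead just say: \emph{if} the smallest class has size $s$, then since each of the other $k-1$ classes has size $\ge s$ and they sum to $n-s$, elementary estimates give $\prod_i n_i \ge s \cdot (n - (k-1)s)$ (taking one large class of size $\geq n-(k-1)s+(k-2)$ and the rest $\geq 1$)... The cleanest route: bound $\prod_{i \ge 2} n_i \ge n - s$ is false in general, so use $\prod_{i\ge 2}n_i \ge \max_i n_i \ge (n-s)/(k-1)$, hence $\prod_i n_i \ge s(n-s)/(k-1)$. (ii) Count colourings with $\min_i n_i = s$: there are at most $k \binom{n}{s} k^{n-s} \le k \cdot n^s \cdot k^{n-s}$ of them (choose which class is smallest, choose its $s$ vertices, colour the rest arbitrarily). (iii) So the expected count is at most $\sum_{s=1}^{\lfloor n/k\rfloor} k\, n^s k^{n-s} (1-p)^{s(n-s)/(k-1)}$. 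Using $1-p \le e^{-p}$ and $p \ge D(\ln n)/n$, the term for a given $s$ is at most $k^{n-s+1} \exp\bigl(s\ln n - \frac{D \ln n}{n}\cdot \frac{s(n-s)}{k-1}\bigr) = k^{n+1}\exp\bigl(s\ln n\,[1 - \tfrac{D(n-s)}{(k-1)n}] - s\ln k + (\text{stuff})\bigr)$. For $s$ small relative to $n$ the bracket $1 - \frac{D(n-s)}{(k-1)n} \approx 1 - \frac{D}{k-1} < 0$ since $D > k-1$, so each term is $\exp(-\Theta(s\ln n))$ up to the $k^n$ factor — and here is the subtlety: the crude bound $k^{n-s}$ for the number of colourings of the large part is \emph{too lossy}, because $k^n$ can overwhelm $\exp(-\Theta(s\ln n))$ when $s$ is a small constant. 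This is the main obstacle and must be handled with care.

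The fix, and the hard part: one cannot afford the factor $k^{n-s}$ for colourings of the complement when $s$ is small; instead, invoke Corollary~\ref{cor:reduction2} (the reduction). A rainbow-free $k$-colouring of $H$ with a smallest class $S$ of size $s$ yields a rainbow-free $(k-1)$-colouring of the induced $(k-1)$-uniform hypergraph $H_S$ on $n-s$ vertices. Now $H_S$ is itself a random $(k-1)$-uniform hypergraph: each $(k-1)$-set $f \subseteq V \setminus S$ becomes a hyperedge of $H_S$ iff some hyperedge $f \cup \{v\}$ with $v \in S$ is present in $H$, which happens with probability $1-(1-p)^s \ge 1 - e^{-ps} =: q_s$, independently across the $f$'s. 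So we can set up an \emph{induction on $k$}: the base case $k=2$ is the classical connectivity threshold (cited as \cite[Theorem~VII.9]{Bollobas}), and for the inductive step we argue that for every choice of small $S$ (size $s$, at most $\binom{n}{s} \le n^s$ choices), the hypergraph $H_S$ is a random $(k-1)$-uniform hypergraph with edge probability $\ge q_s \ge 1 - e^{-s D(\ln n)/n}$; when $s$ is at least a slowly growing function of $n$ this exceeds $(k-2)\ln(n-s)/(n-s)$ so by induction $H_S$ is \emph{not} rainbow-free $(k-1)$-colourable whp, and a union bound over the $n^s$ choices of $S$ (which is $n^{o(n)}$, easily beaten) still works; when $s$ is small (bounded, or $o(\ln n)$), $q_s$ is too small to directly apply induction, so for this regime we go back to the direct first-moment computation but now we only need to colour $S$ and the $(k-1)$-colouring of $H_S$, not colour $V\setminus S$ freely — the number of rainbow-free $(k-1)$-colourings of a fixed $(k-1)$-uniform hypergraph on $m$ vertices is at most the number of surjections to $[k-1]$, which is $\le (k-1)^m$, so we are back to the same $k^n$ obstacle. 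Hence the genuinely load-bearing estimate is that for $s$ in the small regime, $\prod_i n_i \ge s(n-s)/(k-1) \ge s n /(2(k-1))$ say, so $(1-p)^{\prod_i n_i} \le \exp(-\frac{p s n}{2(k-1)}) \le n^{-\frac{Ds}{2(k-1)}} = n^{-\Theta(s)}$, which must beat $k^n$; since this fails for bounded $s$, one must \emph{also} use that a single rainbow-free colouring requires many hyperedges to be absent across \emph{different} small classes simultaneously — equivalently, sharpen step (i) to $\prod_i n_i \ge \frac{(n-s)^{k-1}}{(k-1)^{k-1}} \cdot s$ only when all non-smallest classes are balanced, but in the genuinely unbalanced case (several singletons) use that with $t$ singleton classes one has $t$ independent constraints each killing $\ge (n-t)^{k-1}/(k-1)^{k-1} \cdot$... . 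I expect the cleanest write-up to combine: the reduction of Corollary~\ref{cor:reduction2} to peel off one singleton class, reducing $k$-rainbow-freeness to $(k-1)$-rainbow-freeness of a \emph{denser} random $(k-1)$-hypergraph (edge probability jumps from $p$ to $\approx 1-(1-p)^1 = p$ for a single vertex — no gain — but to $1-(1-p)^s$ for a size-$s$ class, which is where the gain is), together with an induction on $k$ whose base case $k=2$ is the known graph result; the main obstacle is exactly controlling the regime where the smallest class is a constant size, where the union-bound factor $k^n$ is fighting a per-colouring survival probability of only $n^{-O(1)}$, and resolving it requires either a smarter count of colourings (using the structure forced by \emph{all} the small classes, not just one) or a more careful second-moment-free argument localised near a fixed vertex.
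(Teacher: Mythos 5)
Your framework (first moment over all colourings, survival probability $(1-p)^{\prod_i n_i}$) matches the paper's, but your proposal does not close, and you say so yourself: the union bound with the count $k\binom{n}{s}k^{n-s}$ carries a factor $k^{n}$ that the survival probability $n^{-\Theta(s)}$ cannot beat for small $s$, and neither of your proposed fixes (induction on $k$ via Corollary~\ref{cor:reduction2}, or reverting to the direct count) resolves this. The inductive route has an additional unaddressed quantifier problem: to conclude non-colourability of $H$ you need $H_{S}$ to fail for \emph{every} candidate small class $S$ simultaneously, so the inductive hypothesis would have to deliver failure probability $o\bigl(n^{-s}\bigr)$ rather than $o(1)$, which the statement being proved does not provide.

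The missing idea is a different way of counting colourings: a colouring is determined by its $k-1$ \emph{smallest} colour classes, since the largest class is forced to be the complement. Writing $\Sigma=s_1+\dots+s_{k-1}$ and $\Pi=s_1\cdots s_{k-1}$ for the sorted size sequence $s_1\le\dots\le s_k$, the number of colourings with that sequence is at most $n^{\Sigma}$ (not $k^{n}$), while the number of rainbow hyperedges is $\Pi\, s_k\ge \Pi\, n/k$, so the expected count per sequence is at most $n^{\Sigma-(D/n)\Pi(n-\Sigma)}$. The point is then that $\Pi$ grows essentially linearly in $\Sigma$ (e.g.\ $\Pi\ge 2(\Sigma-k+1)$ once at least two of the small classes have size $\ge 2$), so the exponent is $\Sigma\bigl(1-2D/k\bigr)+O(1)<0$ for $D>k-1$; the degenerate shapes where $\Pi$ is not linear in $\Sigma$ (all small classes singletons, or all but one) have to be treated as separate cases, which is exactly the five-case analysis the paper carries out. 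Your lower bound $\prod_i n_i\ge s(n-s)/(k-1)$ in terms of the \emph{smallest} class alone is too weak to support this; the product of all $k-1$ non-largest classes is the quantity that must be tracked.
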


  In order to prove Lemma~\ref{lem:small_p}, we use the second moment method;
  i.e, use the second moment of a random variables to bound the probability that
  the variable is far from its mean.  

  Let $X$ be a nonnegative integer-valued random variable such that 
  $X=\sum_{i=1}^m X_i$, where $X_i$ is the indicator variable for event $E_i$.
  For indices $i,j$ write $i\sim j$ if $i\neq j$ and the events $E_i$ and $E_j$
  are \emph{not} independent. We set (the sum is over ordered pairs)
  \[
    \Delta = \sum_{i\sim j} \Pr[E_i \wedge E_j].
  \]

  \begin{proposition}[\protect{\cite[Corollary~4.3.4]{the_prob_method}}]
    \label{prop:alon}
    If $\Ev[X]\to\infty$ and $\Delta=o(\Ev[X]^2)$ then $\Pr[X>0]\to 1$.
  \end{proposition}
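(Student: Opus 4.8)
The plan is to derive the conclusion from Chebyshev's inequality, so that the whole argument reduces to controlling the variance of $X$. Since $X$ is nonnegative and integer-valued, the event $\{X=0\}$ is contained in $\{|X-\Ev[X]|\geq \Ev[X]\}$, and Chebyshev's inequality gives $\Pr[X=0] \leq \operatorname{Var}[X]/\Ev[X]^2$. Thus it suffices to prove $\operatorname{Var}[X] = o(\Ev[X]^2)$; the two hypotheses $\Ev[X]\to\infty$ and $\Delta = o(\Ev[X]^2)$ are precisely what will feed into this bound.

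First I would expand the variance as a double sum of covariances, $\operatorname{Var}[X] = \sum_{i} \operatorname{Var}[X_i] + \sum_{i\neq j}\operatorname{Cov}[X_i,X_j]$. For the diagonal terms, each $X_i$ is a $\{0,1\}$-indicator, so $\operatorname{Var}[X_i] = \Ev[X_i]-\Ev[X_i]^2 \leq \Ev[X_i]$, and summing gives $\sum_i \operatorname{Var}[X_i] \leq \Ev[X]$. For the off-diagonal terms I would split according to the relation $\sim$: when $i\not\sim j$ with $i\neq j$, the events $E_i,E_j$ are independent, so $\operatorname{Cov}[X_i,X_j]=0$; when $i\sim j$ we bound $\operatorname{Cov}[X_i,X_j]\leq \Ev[X_iX_j] = \Pr[E_i\wedge E_j]$, where the inequality holds because $\Ev[X_i]\Ev[X_j]\geq 0$. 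Summing the surviving terms yields $\sum_{i\neq j}\operatorname{Cov}[X_i,X_j] \leq \Delta$, and hence $\operatorname{Var}[X]\leq \Ev[X] + \Delta$.

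It then remains to combine the pieces. Because $\Ev[X]\to\infty$, the linear term satisfies $\Ev[X] = o(\Ev[X]^2)$, and by assumption $\Delta = o(\Ev[X]^2)$; therefore $\operatorname{Var}[X] = o(\Ev[X]^2)$. Substituting into the Chebyshev bound gives $\Pr[X=0]\leq \operatorname{Var}[X]/\Ev[X]^2 \to 0$, that is $\Pr[X>0]\to 1$, as required. There is no serious obstacle beyond bookkeeping: the only points to handle with care are the reduction of $\{X=0\}$ to a large-deviation event (which needs $\Ev[X]>0$, guaranteed eventually by $\Ev[X]\to\infty$) and the observation that independence of $E_i$ and $E_j$ kills exactly the covariance terms not counted in $\Delta$, so that the sum over ordered pairs $i\sim j$ captures the entire off-diagonal contribution.
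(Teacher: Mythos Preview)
Your argument is correct and is exactly the standard second-moment proof of this corollary as given in Alon and Spencer. Note, however, that the paper does not actually prove this proposition: it is stated with a citation to \cite[Corollary~4.3.4]{the_prob_method} and used as a black box, so there is no paper proof to compare against beyond observing that your write-up matches the cited source.
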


\begin{proof}[Proof of Lemma \ref{lem:small_p}]
  Let $H^k_{n,p}$ be a random hypergraph and let $X$ be the number of
  rainbow-free colourings of $H^k_{n,p}$ with only one colour class of size larger than
  one. Our goal is to show that $X>0$ with high probability, and
  thus $H^k_{n,p}$ is rainbow-free colourable with high probability. We will do so by
  invoking Proposition~\ref{prop:alon}.

	We first show that $\Ev[X]$ goes to infinity.

	Let $c$ be a colouring of $H^k_{n,p}$ that uses all $k$ colours and has only one colour class
  of size greater than $1$. 
	We assume that $|C_i|=1$ for $1\leq i\leq k-1$ and $|C_k|=n-k+1$.
	This colouring $c$ is rainbow-free if and only if there are no hyperedges covering all $k$ colour classes.
	There are $1\cdot \dots \cdot 1 \cdot (n-k+1) = n-k+1$ hyperedges with this property, and hence
	\begin{equation*}
		\Pr[c\text{ is a rainbow-free colouring}] = (1-p)^{n-k+1} = \Theta((1-p)^n).
	\end{equation*}
	Since $\ln (1+x) = x + O(x^2)$ for small $x$, we have $1-p = e^{-p + O(p^2)}$ and thus
	\begin{equation*}
		\Pr[c\text{ is a rainbow-free colouring}]
		=\Theta\parens*{e^{-pn + O(p^2n)}}
		=\Theta\parens*{ e^{-D\ln n + O\parens*{D^2(\ln n)^2/n}}}
		= \Theta(n^{-D}).
	\end{equation*}
	The number of colourings $c$ with one large colour class of size $n-k+1$ is $\binom n{n-k+1} =
	\Theta(n^{k-1})$.
	The expected number of such colourings that are rainbow-free is now given by
	\begin{equation*}
		\Ev[X] = \binom n{n-k+1} (1-p)^{n-k+1} = \Theta(n^{k-1} n^{-D}) = \Theta(n^{k-1-D}).
	\end{equation*}
	Since $D<k-1$, this implies that $\Ev[X]\to\infty$ when $n\to\infty$.

  \medskip
  Enumerate all possible colourings $c$ (up to permutations of colours)
	satisfying $|C_k| = n-k+1$ by $c^1$ up to $c^\ell$.
	We write $i\sim j$ if $i\neq j$ and $|C^i_k\cap C^j_k|=n-k$.
	\begin{figure}
		\centering
\begin{tikzpicture}[framed,inner frame xsep=5mm,scale=1.0]
	\tikzset{venn circle/.style={draw,circle,minimum width=5cm}}
	\tikzset{dot/.style={draw,circle,minimum width=1mm,fill=red,color=#1}}

	\node [venn circle] (AM) at (0,0) {};
	\node [venn circle] (BM) at (0,1.5cm) {};
  \node [align=center,above=1.0cm] at (BM) {$A=C^i_k\setminus C^j_k$\\[1mm]$2\leq |A|\leq k$};
  \node [align=center,below=1.0cm] at (AM) {$B=C^j_k\setminus C^i_k$\\[1mm]$2\leq |B|\leq k$};
	\node [align=center] at (0,0.75cm) {$|C^i_k\cap C^j_k| \leq n-k-1$};
  \node [align=right]  at (3.4,-2.3) {$R=V\setminus C^i_k\setminus C^j_k$\\[1mm]$|R|\leq k-3$};
	\node  at (3.6,3.8) {$|V|=n$}; 
\end{tikzpicture}

\caption{This Venn diagram shows the sets $C^i_k$ (the upper circle) and $C^j_k$ (the lower circle) from the proof of Lemma~\ref{lem:small_p}, along with the definitions of $A$, $B$, and $R$. We have $|C^i_k| = |C^j_k| = n-k+1$ and $|C^i_\ell| = |C^j_\ell|=1$ for all $1\leq \ell<k$.} \label{fig:4_venn}
\end{figure}
	To every colouring $c^i$ we associate the event $E_i$ that $c^i$ is rainbow-free.

 Consider the quantity
  \begin{equation}\label{def:delta}
    \Delta = \sum_{i\sim j} \Pr[E_i \wedge E_j].
	\end{equation}

	We will prove that $\Delta = o(\Ev[X]^2)$ and 
  and thus finish the proof by Proposition~\ref{prop:alon}. In order for
  Proposition~\ref{prop:alon} to be applicable, we need that (for $i\neq j$) $i\sim j$ if the events $E_i$ and $E_j$ are not independent.

  By the definition of $\sim$, we have
	\begin{equation*}
		\Delta = \sum_{i} \sum_{\substack{j\neq i\\ |C^i_k\cap C^j_k|=n-k}} \Pr[E_i \wedge E_j].
	\end{equation*}

  We claim that the event $E_i$ is independent from $E_j$ if $i\neq j$ and $i\nsim j$.
	In this case, the overlap between $C^i_k$ and $C^j_k$ is at most $n-k-1$,
	since an overlap of $n-k$ implies $i\sim j$ and an overlap of $n-k+1$ implies equality.
	Write $A= C^i_k \setminus C^j_k$, $B=C^j_k \setminus C^i_k$, and
  $R=V(H^k_{n,p}) \setminus C^i_k \setminus C^j_k$,
	as is illustrated in Figure~\ref{fig:4_venn}.
  The colouring $c^i$ is rainbow-free if all hyperedges of the form $e_1 = B\cup R\cup
  \{x\}$ for $x\in C^i_k$
	are not present.
  On the other hand, the colouring $c^j$ is rainbow-free if all hyperedges $e_2=A\cup R\cup\{y\}$ for $y\in C^j_k$ are not
	present.
	We have that $|A| = |C^i_k| - |C^i_k \cap C^j_k| \geq (n-k+1) - (n-k-1) = 2$.
	Similarly we have $|B|\geq 2$.
	Since $A$ is disjoint from $B$, we now know that the hyperedges $e_1$ and $e_2$ can not be equal.
	Hence, the colourings $c^i$ and $c^j$ depend on different hyperedges being present,
	and thus these events are independent indeed.

	Let $i$ and $j$ be such that $i\sim j$; i.e., $i\neq j$ and $|C^i_k\cap C^j_k|=n-k$. In this
  case, we have $|A|=|B|=1$.
  The hyperedges that the events $E_i$ and $E_j$ depend on are of the form $A\cup R\cup \{x\}$ for $x\in C^i_k$
  and $B\cup R\cup \{y\}$ for $y\in C^j_k$ respectively.
	We count $2\cdot(n-k+1)$ hyperedges in total, but the hyperedge $A\cup R\cup B$ is counted twice.
	Hence, the probability that $c^i$ and $c^j$ are both rainbow-free colourings is
	\begin{equation*}
		\Pr[E_i \wedge E_j] = (1-p)^{2(n-k+1)-1} \leq e^{-p(2n-2k+1)} = \Theta(e^{-2pn}).
	\end{equation*}
	Given $c^i$ with $|C^i_k|=n-k+1$, the number of colourings $c^j$ such that the large colour classes
	overlap in $n-k$ positions is $(n-k+1)(k-1)$. Putting this back in $\Delta$ gives
	\begin{align*}
		\Delta
		&= \sum_i (n-k+1)(k-1) (1-p)^{2n-2k+1}\nonumber\\
		&\leq \binom n{n-k+1} (n-k+1)(k-1) e^{-p(2n-2k+1)}\nonumber\\
		&\leq n^{k-1} \cdot n \cdot k \cdot e^{-2D\ln n + O((\ln n)/n)}\nonumber\\
		&=O( n^k\cdot  n^{-2D})
		= O(n^{k-2D}).\label{eq:delta}
	\end{align*}
	Since $k\geq 3$ we have $0 < k-2$ and hence $k-2D < 2k-2-2D = 2(k-1-D)$.
	We conclude that
	\begin{equation*}
		\Delta = O(n^{k-2D}) = o(n^{2(k-1-D)})
	\end{equation*}
	and thus $\Delta = o(\Ev[X]^2)$.
\end{proof}

We will now prove the bound in the other direction, Lemma~\ref{lem:large_p}.
\begin{proof}[Proof of Lemma~\ref{lem:large_p}]
	We use the first moment method to show that the expected number of
  rainbow-free colourings of $H^k_{n,p}$ goes to
	$0$.
	We identify a colouring by the sequence $(s_1,\dots,s_k)$ where $s_i = |C_i|$
  and $s_1\leq \dots \leq s_k$.
	We divide the set of all possible sequences into five types:
	\begin{enumerate}
		\item \label{type1}
			$(s_i)_i = (1,\dots,1,n-k+1)$. There is one such sequence.
		\item \label{type2}
			$(s_i)_i = (1,\dots,1,2,n-k)$. There is one such sequence.
		\item \label{type3}
			$(s_i)_i = (1,\dots,1,x,n-k+2-x)$ with $x\geq 3$. This case contains $O(n)$ sequences.
		\item \label{type4}
			$2\leq s_{k-2}\leq s_{k-1}$ and $s_1+\dots+s_{k-1}\leq 6k$.
			This case contains $O(1)$ sequences, since $k$ is a constant.
		\item \label{type5}
			$2\leq s_{k-2}\leq s_{k-1}$ and $s_1+\dots+s_{k-1}> 6k$.
			This case contains $O(n^{k-1})$ sequences.
	\end{enumerate}
	In each case we will show that the expected number of rainbow-free colourings of the relevant type is $o(1)$, from
	which it follows that the probability that $H^k_{n,p}$ is rainbow-free colourable is $o(1)$.

	Before starting calculations, we introduce some notation.
	We write $\Sigma=  s_1 + \dots + s_{k-1}$ so that $s_k = n - \Sigma \geq n/k$,
	and we write $\Pi = s_1\cdots s_{k-1}$.

	A colouring is rainbow-free if none of the $s_1\cdots s_k$ hyperedges that span all colour classes is present.
	This happens with probability
	\begin{equation*}
		\Pr[c \text{ is rainbow-free}\mid (s_i)_i]
		= (1-p)^{s_1\cdots s_k}
		\leq e^{-ps_1\cdots s_k}
		\leq n^{-D/n \cdot \Pi(n-\Sigma)}.
	\end{equation*}
  Since the number of colourings with a given sequence $(s_i)_i$ is
  upper-bounded by $n^{s_1}\cdots n^{s_{k-1}}=n^{\Sigma}$, the expected number
  of rainbow-free colourings with a given sequence $(s_i)_i$ is bounded by
	\begin{equation}
		\Ev[\text{number of rainbow-free colourings} \mid (s_i)_i]
		\leq n^{\Sigma - D/n \cdot \Pi(n-\Sigma)}.
		\label{eq:ev}
	\end{equation}
  In each of the cases below we will bound the exponent of $n$ in (\ref{eq:ev}).

	Write $D=k-1+\delta$ for some $\delta>0$.

  \textbf{Case~1}: 
	We have $\Sigma = k-1$ and $\Pi = 1$.
  Putting this into~(\ref{eq:ev}) gives an exponent of
  \begin{equation*}
		\Sigma - D/n \cdot \Pi(n-\Sigma) =
		(k-1) - D \cdot (1-(k-1)/n) \to -\delta.
	\end{equation*}
	This is less than $-\delta/2$ if $n$ is large enough.
	Hence, this case is $o(1)$.

  \textbf{Case~2}: 
  Here we have $\Sigma = k$ and $\Pi = 2$. The exponent of $n$ in~(\ref{eq:ev}) becomes
	\begin{equation}
		\Sigma - D/n \cdot \Pi(n-\Sigma) =
		k - (k-1+\delta)\cdot 2 \cdot (1-k/n) \to -k + 2 - 2\delta.
		\label{eq:c2}
	\end{equation}
	Since this converges to a negative number, it will be less than $-1/2$ for all large enough $n$.
	Hence, this case is $o(1)$ as well.

  \textbf{Case~3}: 
  There are $O(n)$ sequences in this case, so each of them must give an expected value that is $o(n^{-1})$.
	The variables are $\Sigma = k+x-2$ and $\Pi = x$.
  The exponent in~(\ref{eq:ev}) is a quadratic function of $x$:
	\begin{equation}
		\Sigma - (k-1+\delta)/n \cdot \Pi(n-\Sigma) =
		k+x-2-(k-1+\delta) \cdot x \cdot (1-(k+x-2)/n).
		\label{eq:quadratic}
	\end{equation}
	Since the leading coefficient is positive, and we want to prove an upper bound, it suffices to check the
	boundaries $x=3$ and $x=n/2$.
	(The maximal possible value of $x$ is actually even smaller, but overestimating doesn't hurt.)
	For $x=3$ we get
	\begin{equation}
		k + 1 - 3(k-1+\delta) (1-(k+1)/n) \to -2k +4 -3\delta < -1.
		\label{eq:c3}
	\end{equation}
	Since this converges to something less than $-1$, we know that the expected value for $x=3$
	is $o(n^{-1})$ for $n$ large enough.

  Since the value of~(\ref{eq:quadratic}) goes to $-\infty$ if $x=n/2$ and
  $n\to\infty$, the upper bound~(\ref{eq:c3}) on the exponent in~(\ref{eq:ev})
  works for the $x=n/2$ case as well.
  
  \textbf{Case~4}: 
	We are given that $\Sigma \leq 6k$. Furthermore we have $s_{k-2}\geq 2$.
	The minimal value of $\Pi$ is attained if $s_1=\dots=s_{k-3}=1$ and $s_{k-1} =
	\Sigma-(k-3)-2=\Sigma-k+1$.
	Thus, we have $\Pi \geq 2(\Sigma - k + 1)$.
	Since $\Sigma$ is a sum of $k-1$ terms, of which the last two are at least $2$, we also have $\Sigma \geq k+1$.
	\begin{align*}
		\Sigma - (k-1+\delta)/n \cdot \Pi(n-\Sigma)
		&\leq \Sigma - (k-1+\delta) 2 (\Sigma -k+1) (1-\Sigma/n)\\
		&\to \Sigma - (k-1+\delta) 2 (\Sigma -k+1).
	\end{align*}
	The step where we take the limit is allowed because $\Sigma$ is bounded, and hence the term divided
	by $n$ goes to $0$ indeed. We continue
	\begin{align}
		\Sigma - (k-1+\delta) 2 (\Sigma -k+1)
		&= \Sigma(1-2(k-1+\delta)) + 2(k-1)(k-1+\delta)\nonumber\\
		&\leq (k+1)(-2k+1-2\delta) + 2(k-1)(k-1+\delta)\nonumber\\
		&= -2k^2-k+1-2k\delta-2\delta + 2k^2-4k+2+2k\delta-2\delta\nonumber\\
		&= -3k+1-4\delta < 0.
		\label{eq:c4}
	\end{align}
	As before this converges to something negative, and hence it will be $o(1)$.

  \textbf{Case~5}: 
	We are now ready for the only remaining case.
	Here we have $\Sigma \geq 6k$ and as before this implies $\Pi \geq 2(\Sigma -k+1)$.
  \begin{equation*}
		\Sigma - (k-1+\delta)/n \cdot \Pi(n-\Sigma)
		\leq
		\Sigma - (k-1+\delta)/n \cdot 2(\Sigma -k+1)(n-\Sigma).
	\end{equation*}
	Using that $s_k = n-\Sigma \geq n/k$ and doing some rewriting gives
	\begin{align*}
		\Sigma - (k-1+\delta)/n \cdot \Pi(n-\Sigma)
		&\leq
		\Sigma - (k-1+\delta)/n \cdot 2(\Sigma -k+1)\frac nk\\
		&=
		\Sigma - \frac{k-1+\delta}k \cdot 2(\Sigma -k+1)\\
		&=
		(1-2(k-1+\delta)/k)\Sigma + 2(k-1)(k-1+\delta)/k\\
		&=
		(-1+2/k-2\delta/k)\Sigma + 2(1-1/k)(k-1+\delta).
	\end{align*}
	We are now at the point where we can use $\Sigma \geq 6k$.
	Because $-1+2/k-2\delta/k < 0$ we get
	\begin{align}
		\Sigma - (k-1+\delta)/n \cdot \Pi(n-\Sigma)
		&\leq (-1+2/k-2\delta/k) \cdot 6k +2(1-1/k)(k-1+\delta)\nonumber\\
		&= -6k + 12-12\delta + 2k-2+2\delta - 2+2/k-2\delta/k\nonumber\\
		&\leq -4k+8-10\delta + 2/k
		\leq -4k+9-10\delta.
		\label{eq:c5}
	\end{align}
	This last value is strictly less than $-k+1$, which is just what we needed.
	We conclude that the total expected number of rainbow-free colourings in this case is $o(1)$ as well,
	and hence the random hypergraph $H^k_{n,p}$ is not rainbow-free colourable with high probability.
\end{proof}

Lemma~\ref{lem:large_p} can be made a bit stronger with respect to the the
colourings of type $(1,\dots,1,n-k+1)$.

\begin{proposition}\label{prop:type1}
	If a random hypergraph $H^k_{n,p}$, with $k\geq 3$, $p = D(\ln n)/n$, and
  $D>k-1$ is rainbow-free colourable then with high probability it has a colouring of type $(1,\dots,1,n-k+1)$.
\end{proposition}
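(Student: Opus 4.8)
The plan is to show that the probability that $H^k_{n,p}$ is rainbow-free colourable while having \emph{no} rainbow-free colouring of type~\ref{type1} is $o\bigl(\Pr[H^k_{n,p}\text{ is rainbow-free colourable}]\bigr)$, which is exactly the claimed conditional statement; write $D=k-1+\delta$ with $\delta>0$. Up to permuting colours a surjective colouring corresponds to a sequence $s_1\le\dots\le s_k$ with $\sum_i s_i=n$, and the sequences that are not of type~\ref{type1} are precisely those of types~\ref{type2}--\ref{type5} from the proof of Lemma~\ref{lem:large_p}: distinguishing $s_{k-2}=1$ (which forces $s_1=\dots=s_{k-2}=1$ and hence type~\ref{type1}, \ref{type2}, or~\ref{type3} according as $s_{k-1}$ is $1$, is $2$, or is at least $3$) from $s_{k-2}\ge2$ (type~\ref{type4} or~\ref{type5}). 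Consequently a rainbow-free colourable hypergraph without a rainbow-free colouring of type~\ref{type1} must have one of type~\ref{type2}--\ref{type5}, so it suffices to upper-bound $\Pr[\exists\text{ rainbow-free colouring of type~\ref{type2}--\ref{type5}}]$ and to lower-bound $\Pr[X>0]$, where $X$ is the number of rainbow-free colourings of type~\ref{type1} from the proof of Lemma~\ref{lem:small_p}.

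For the upper bound I would reuse the per-type estimates in the proof of Lemma~\ref{lem:large_p}, now keeping track of constants rather than merely concluding $o(1)$. Multiplying the bound~(\ref{eq:ev}) by the number of sequences in a type, the expected number of rainbow-free colourings of that type is, by~(\ref{eq:c2}), $n^{-k+2-2\delta+o(1)}$ for type~\ref{type2}; by~(\ref{eq:c3}) and the $O(n)$ sequences, $n^{-2k+5-3\delta+o(1)}$ for type~\ref{type3}; by~(\ref{eq:c4}), $n^{-3k+1-4\delta+o(1)}$ for type~\ref{type4}; and by~(\ref{eq:c5}) and the $O(n^{k-1})$ sequences, $n^{-3k+8-10\delta+o(1)}$ for type~\ref{type5}. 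For $k\ge3$ every one of these exponents is strictly smaller than $-\delta$; letting $\varepsilon>0$ be the smallest of the resulting gaps, a union bound over the constantly many types shows that $\Pr[\exists\text{ rainbow-free colouring of type~\ref{type2}--\ref{type5}}]$ is at most the sum of these four quantities, which is $O(n^{-\delta-\varepsilon})$.

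For the lower bound I would invoke the second-moment inequality $\Pr[X>0]\ge\Ev[X]^2/\Ev[X^2]$, recycling the computations in the proof of Lemma~\ref{lem:small_p}, which never used the hypothesis $D<k-1$: there $\Ev[X]=\Theta(n^{k-1-D})=\Theta(n^{-\delta})$ and $\Delta=O(n^{k-2D})=O(n^{-(k-2)-2\delta})$. Decomposing $\Ev[X^2]=\Ev[X]+\Delta+\sum_{i\ne j,\ i\nsim j}\Pr[E_i]\Pr[E_j]$ and bounding the last sum by $\Ev[X]^2=\Theta(n^{-2\delta})$, the linear term $\Ev[X]=\Theta(n^{-\delta})$ dominates when $k\ge3$ and $\delta>0$, so $\Ev[X^2]=\Theta(n^{-\delta})$ and therefore $\Pr[H^k_{n,p}\text{ is rainbow-free colourable}]\ge\Pr[X>0]\ge\Theta(n^{-2\delta})/\Theta(n^{-\delta})=\Theta(n^{-\delta})$. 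Dividing the bound of the previous paragraph by this one shows the conditional probability of not having a type-\ref{type1} colouring is $O(n^{-\varepsilon})=o(1)$, which proves the proposition.

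The routine parts are the per-type exponent inequalities (all already displayed in the proof of Lemma~\ref{lem:large_p}) together with the elementary check that each limit lies below $-\delta$ when $k\ge3$. The main obstacle, and the one genuinely new point, is the lower bound $\Pr[X>0]=\Omega(n^{-\delta})$: one has to verify that the dominant contribution to $\Ev[X^2]$ is the linear term $\Ev[X]$, rather than $\Delta$ or the independent-pair sum, so that $\Ev[X^2]=\Theta(\Ev[X])$ and the second-moment inequality gives a matching lower bound. (Plain Chebyshev is of no use here, since $\mathrm{Var}(X)\approx\Ev[X]\gg\Ev[X]^2$.)
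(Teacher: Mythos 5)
Your proposal is correct, and its overall architecture coincides with the paper's: classify the non-type-\ref{type1} colourings as types \ref{type2}--\ref{type5}, reuse the exponent bounds (\ref{eq:c2})--(\ref{eq:c5}) to show their total expected number is $O(n^{-\delta-\varepsilon})$ for some $\varepsilon>0$ (the paper gets $o(n^{-1-2\delta})$, which serves the same purpose), and divide by a lower bound $\Pr[X_1>0]=\Omega(n^{-\delta})$. You also correctly identify the one genuinely new ingredient as that lower bound, and you correctly note that the computations of $\Ev[X]=\Theta(n^{k-1-D})$ and $\Delta=O(n^{k-2D})$ from the proof of Lemma~\ref{lem:small_p} never used $D<k-1$ and so transfer verbatim to $p=D(\ln n)/n$ with $D>k-1$.

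Where you diverge from the paper is in how that lower bound is extracted. You use the Cauchy--Schwarz second-moment inequality $\Pr[X>0]\ge\Ev[X]^2/\Ev[X^2]$ together with the decomposition $\Ev[X^2]=\Ev[X]+\Delta+\sum_{i\ne j,\,i\nsim j}\Pr[E_i]\Pr[E_j]\le\Ev[X]+\Delta+\Ev[X]^2$; since $\Delta=O(n^{-(k-2)-2\delta})$ and $\Ev[X]^2=\Theta(n^{-2\delta})$ are both $o(n^{-\delta})=o(\Ev[X])$ for $k\ge3$, the linear term dominates and $\Pr[X>0]\ge\Theta(n^{-2\delta})/\Theta(n^{-\delta})=\Theta(n^{-\delta})$. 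The paper instead conditions on a fixed $c^i$ being rainbow-free, bounds the probability that \emph{any other} type-\ref{type1} colouring is also rainbow-free by $\sum_{j\sim i}\Pr[E_j\mid E_i]+\sum_{j\nsim i,\,j\ne i}\Pr[E_j]=O(n^{-\delta})$, and sums the resulting disjoint events ``$c^i$ is the unique rainbow-free colouring'' to get the same $\Theta(n^{-\delta})$. The two routes consume exactly the same inputs (the independence claim for $i\nsim j$ and the joint probability for $i\sim j$); yours is the more standard, off-the-shelf tool and correctly sidesteps Chebyshev (which indeed fails here since $\mathrm{Var}(X)\gg\Ev[X]^2$), while the paper's gives the slightly stronger byproduct that with probability $\Theta(n^{-\delta})$ the type-\ref{type1} colouring is unique. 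Both are valid; I see no gap in your argument.
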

\begin{proof}
	The proof depends heavily on the claims established in the proofs of
  Lemmas~\ref{lem:small_p} and~\ref{lem:large_p}.

  Let $X_i$ be the number of rainbow-free colourings in Case $i$ of the proof of
  Lemma~\ref{lem:large_p}.
	Since $n^{1/n} = e^{(\ln n)/n} \to 1$,
  we know that the convergence of exponents in~(\ref{eq:ev}) in the proof of Lemma~\ref{lem:large_p} implies that
	$n$ raised to the limit of the exponent is off by at most a constant factor. Hence,
	\begin{equation*}
		\mu:=\Ev[X_1] = \Theta(n^{k-1-D}) = \Theta(n^{-\delta}),
	\end{equation*}
	where $D=k-1+\delta$.
	In Cases~2 to~5 of the proof of Lemma~\ref{lem:large_p},
  Equations~(\ref{eq:c2}), (\ref{eq:c3}), (\ref{eq:c4}), and~(\ref{eq:c5}) imply
  that the expected number of rainbow-free colourings in each case
	is bounded by
	\begin{align*}
		\Ev[X_2] &= O(n^{-k+2-2\delta}),\\
    \Ev[X_3] &= O(n)\cdot O(n^{-2k+4-3\delta})= O(n^{-2k+5-3\delta}),\\
		\Ev[X_4] &= O(n^{-3k+1-4\delta}),\\
    \Ev[X_5] &= O(n^{k-1})\cdot O(n^{-4k+9-10\delta}) = O(n^{-3k+8-10\delta}).
	\end{align*}
	Since $k\geq 3$, each of these terms is $o(n^{-1-2\delta})$.
	Hence for $2\leq i\leq 5$ we have
	$ \Pr[X_i>0] \leq \Ev[X_i] = o(n^{-1-2\delta})$.
	To show that almost all random rainbow-free colourable hypergraphs are
  rainbow-free colourable with a colouring of the first
	type indeed, all we have to show is that $\Pr[X_1>0] = \Theta(n^{-\delta})$.

  As in the proof of Lemma~\ref{lem:small_p} enumerate all colourings by $c^1$ to $c^\ell$ and suppose that $c^i$ is a rainbow-free colouring.
	The probability that there is another rainbow-free colouring $c^j$ is bounded by
	\begin{align*}
		\sum_{j\sim i} \Pr[c^j\mid c^i] + \sum_{j\not \sim i,\, j\neq i} \Pr[c^j]
		&\leq n\cdot k \cdot e^{-p(n-k)} + n^{k-1} e^{-p(n-k+1)}\\
		&= O(n^{k-1}n^{-(k-1+\delta)}) = O(n^{-\delta}).
	\end{align*}
	Hence, the probability that the number of rainbow-free colourings is exactly $1$ is at least
	\begin{equation*}
		\sum_{i} \Pr[c^i] (1-O(n^{-\delta})) \sim \sum_i \Pr[c^i] =\Theta(n^{k-1-D}) = \Theta(n^{-\delta}).
	\end{equation*}
  This implies that the probability that $H^k_{n,p}$ is rainbow-free colourable is at
  least $\Theta(n^{-\delta})$. 
\end{proof}

Proposition~\ref{prop:type1} implies that checking colourings of the type
$(1,\dots,1,n-k+1)$ is sufficient to find a colouring in $H^k_{n,p}$ with high
probability \emph{if} we know that the hypergraph is rainbow-free colourable.

\section{Conclusions}

We showed that a threshold function of the event that a random $k$-uniform
hypergraph is rainbow-free colourable is $(k-1)(\ln n)/n$. Our results do not
say anything about the case when the hyperedge probability $p$ is close to the
threshold. As far as we know, the behaviour of the rainbow-free colourings of a
random hypergraph in this case is open.

%\bibliographystyle{elsa}
%\bibliography{kz}

\end{document}